\documentclass[leqno,12pt]{article}
\setlength{\textwidth}{16cm}
\setlength{\textheight}{23cm}
\setlength{\oddsidemargin}{0mm}
\setlength{\topmargin}{-1cm}

\usepackage{latexsym}
\usepackage{amsmath}
\usepackage{amssymb}
\usepackage{enumerate}

\usepackage{theorem}
\newtheorem{theorem}{Theorem}[section]

\newtheorem{lemma}[theorem]{Lemma}
\newtheorem{corollary}[theorem]{Corollary}

\newtheorem{thmA}{Theorem A}
\newtheorem{thmB}{Theorem B}

\theorembodyfont{\rmfamily}
\newtheorem{proof}{\textmd{\textit{Proof.}}}

\newtheorem{remark}[theorem]{Remark}
\newtheorem{example}[theorem]{Example}
\newtheorem{definition}[theorem]{Definition}

\makeatletter

\@addtoreset{equation}{section}
\makeatother

\newcommand{\qedd}{\hfill \Box}

\newcommand{\lra}{\longrightarrow}
\newcommand{\ora}{\overrightarrow}
\newcommand{\ola}{\overleftarrow}
\newcommand{\wt}{\widetilde}
\newcommand{\wh}{\widehat}
\newcommand{\ol}{\overline}
\newcommand{\N}{\ensuremath{\mathbb{N}}}
\newcommand{\R}{\ensuremath{\mathbb{R}}}
\newcommand{\Sph}{\ensuremath{\mathbb{S}}}

\newcommand{\cG}{\ensuremath{\mathcal{G}}}

\newcommand{\cN}{\ensuremath{\mathcal{N}}}
\newcommand{\cT}{\ensuremath{\mathcal{T}}}
\newcommand{\cO}{\ensuremath{\mathcal{O}}}

\def\diam{\mathop{\mathrm{diam}}\nolimits}

\def\Cut{\mathop{\mathrm{Cut}}\nolimits}

\title{Topology of complete Finsler manifolds\\
with radial flag curvature bounded below\footnote{
2010 Mathematics Subject Classification: Primary 53C60; Secondary 53C21, 53C22.}
\footnote{Key words and phrases:
Toponogov's comparison theorem, Finsler geometry, flag curvature, finiteness theorem}}

\author{Kei KONDO \ $\cdot$ \ Shin-ichi OHTA\footnote{
Supported in part by Grant-in-Aid for Young Scientists (B) 23740048}
\ $\cdot$ \ Minoru TANAKA}
\date{\today}
\pagestyle{plain}

\begin{document}
\maketitle

\begin{abstract}
We recently established a Toponogov type triangle comparison theorem
for a certain class of Finsler manifolds whose radial flag curvatures are bounded 
below by that of a von Mangoldt surface of revolution.
In this article, as its applications, we prove the finiteness of topological type and 
a diffeomorphism theorem to Euclidean spaces.
\end{abstract}

\section{Introduction}\label{sec1}

This article is a continuation of \cite{KOT}.
In \cite{KOT}, we have established a Toponogov type triangle comparison theorem (TCT) 
for a certain class of Finsler manifolds whose radial flag curvatures are bounded 
below by that of a von Mangoldt surface of revolution
(see Theorem~\ref{TCT} for the precise statement).
In this article, we prove several applications of our Toponogov theorem
on the relationship between the topology and the curvature of a Finsler manifold.
We remark that, compared to the Riemannian case,
there are only a small number of such kind of results,
e.g., Rademacher's quarter pinched sphere theorem (\cite{Ra}),
Shen's finiteness theorem under lower Ricci and mean (or ${\mathbf S}$-) curvature bounds (\cite{Shvol}), the second author's generalized splitting theorems
under nonnegative weighted Ricci curvature
(\cite{Ospl}), 
and the first author's generalized diameter sphere theorem 
with radial flag curvature bounded from below by $1$ as an application of TCTs (\cite{K}).

\medskip

In order to state our results, let us introduce several notions
in Finsler geometry as well as the geometry of radial curvature.
Let $(M, F, p)$ denote a pair of a forward complete, connected,
$n$-dimensional $C^\infty$-Finsler manifold $(M,F)$ with a base point $p \in M$,
and $d: M \times M \lra [0, \infty)$ denote the distance function induced from $F$.
We remark that the {\em reversibility} $F(-v)=F(v)$ is not assumed in general,
so that $d(x, y) \not= d(y,x)$ is allowed.

For a local coordinate $(x^i)^{n}_{i=1}$ of an open subset $\cO \subset M$,
let $(x^i, v^j)_{i,j=1}^{n}$ be the coordinate of the tangent bundle $T\cO$ over $\cO$ such that 
\[ v:= \sum_{j = 1}^{n} v^j \frac{\partial}{\partial x^j}\Big|_{x},
 \qquad \ x \in \cO. \]
For each $v \in T_xM \setminus \{0\}$, the positive-definite $n \times n$ matrix 
\[
\big( g_{ij} (v) \big)_{i,j= 1}^{n}:= 
\left(
\frac{1}{2} \frac{\partial^2 (F^2)}{\partial v^i \partial v^j}(v)
\right)_{i, j = 1}^{n}
\]
provides us the Riemannian structure $g_{v}$ of $T_x M$ by 
\[
g_{v} \left(
\sum_{i = 1}^{n} a^i \frac{\partial}{\partial x^i}\bigg|_{x}, 
\sum_{j = 1}^{n} b^j \frac{\partial}{\partial x^j}\bigg|_{x}
\right) 
:=
\sum_{i,j = 1}^{n} g_{ij} (v) a^ib^j. 
\]
This is a Riemannian approximation (up to the second order)
of $F$ in the direction $v$.
For two linearly independent vectors $v, w \in T_{x} M \setminus \{0\}$,
the {\em flag curvature} is defined by 
\[
K_M (v, w) := \frac{g_{v} (R^{v} (w, v)v, w)}{g_{v} (v, v) g_{v} (w, w) - g_{v} (v, w)^2},
\]
where $R^{v}$ denotes the curvature tensor induced from the Chern connection
(see \cite[\S 3.9]{BCS} for details).
We remark that $K_M (v, w)$ depends not only on the \emph{flag} $\{sv + tw\,|\, s, t \in \R\}$,
but also on the \emph{flag pole} $\{sv\,|\, s> 0\}$.

Given $v,w \in T_xM \setminus \{0\}$, define the \emph{tangent curvature} by 
\[
\cT_M(v, w) := g_X\big( D^Y_Y Y(x) - D^X_Y Y(x), X(x) \big), 
\]
where the vector fields $X,Y$ are extensions of $v,w$,
and $D_{v}^{w}X(x)$ denotes the covariant derivative of $X$ by $v$ with reference vector $w$. 
Independence of $\cT_M(v,w)$ from the choices of $X,Y$ is easily checked.
Note that $\cT_M \equiv 0$ if and only if $M$ is of \emph{Berwald type}
(see \cite[Propositions~7.2.2, 10.1.1]{Sh}).
In Berwald spaces, for any $x,y \in M$, 
the tangent spaces $(T_xM, F|_{T_xM})$ and $(T_yM, F|_{T_yM})$
are mutually linearly isometric (cf.~\cite[Chapter~10]{BCS}).
In this sense, $\cT_M$ measures the variety of tangent Minkowski normed spaces.
\medskip

Let $\wt{M}$ be a complete $2$-dimensional Riemannian manifold,
which is homeomorphic to $\R^{2}$. 
Fix a base point $\tilde{p} \in \wt{M}$.
Then, we call the pair $(\wt{M}, \tilde{p})$ a {\em model surface of revolution} 
if its Riemannian metric $d\tilde{s}^2$ is expressed 
in terms of the geodesic polar coordinate around $\tilde{p}$ as 
\[
d\tilde{s}^2 = dt^2 + f(t)^2d \theta^2, \qquad 
(t,\theta) \in (0,\infty) \times \Sph_{\tilde{p}}^1, 
\]
where $f : (0, \infty) \lra \R$ is a positive smooth function 
which is extensible to a smooth odd function around $0$, 
and $\Sph^{1}_{\tilde{p}} := \{ v \in T_{\tilde{p}} \wt{M} \,|\, \| v \| = 1 \}$. 
Define the {\em radial curvature function} $G: [0,\infty) \lra \R$
such that $G(t)$ is the Gaussian curvature at $\tilde{\gamma}(t)$,
where $\tilde{\gamma}:[0,\infty) \lra \wt{M}$ is any (unit speed) meridian
emanating from $\tilde{p}$.
Note that $f$ satisfies the differential equation 
$f''+Gf=0$ with initial conditions $f(0) = 0$ and $f'(0) = 1$. 
We call $(\wt{M}, \tilde{p})$ a {\em von Mangoldt surface}
if $G$ is non-increasing on $[0,\infty)$. 
Paraboloids and $2$-sheeted hyperboloids are typical examples of 
von Mangoldt surfaces. An atypical example of such a surface is the following.

\begin{example}{\rm (\cite[Example 1.2]{KT1})}
Set $f (t) := e^{- t^{2}} \tanh t$ on $[0, \infty)$.
Then the non-compact surface of revolution 
$(\wt{M}, \tilde{p})$ with $d\tilde{s}^2 = dt^2 +  f(t)^2d \theta^2$ is of von Mangoldt type, 
and $G$ changes the sign.
Indeed, $\lim_{t \downarrow 0}G (t) = 8$ and $\lim_{t \to \infty}G (t) = - \infty$.
\end{example}

We say that a Finsler manifold $(M, F, p)$ has the
{\em radial flag curvature bounded below by that of a model surface of revolution 
$(\wt{M}, \tilde{p})$} if, 
along every unit speed minimal geodesic $\gamma: [0,l) \lra M$ 
emanating from $p$, we have
\[
K_{M} \big(\dot{\gamma}(t), w \big) \ge G (t)
\]
for all $t \in [0, l)$ and $w \in T_{\gamma(t)}M$ 
linearly independent to $\dot{\gamma}(t)$.

\bigskip

We set 
\begin{equation}\label{G_p}
\mathcal{G}_p (x) := \{ \dot{\gamma}(l) \in T_{x}M \,|\,
 \text{$\gamma$ is a minimal geodesic segment from $p$ to $x$} \},
\end{equation}
where $\gamma:[0,l] \lra M$ with $l=d(p,x)$, and
\[ B_r^+ (p):=\{ x \in M \,|\, d(p,x)<r \}, \qquad
 \diam\left( \partial B_r^+ (p) \right):=\sup_{q_1,\,q_2 \in \partial B_r^+ (p)}d(q_1,q_2). \]
Then, our first main result is a finiteness theorem of topological type.

\begin{thmA}\label{2012_03_26_main1}
Let $(M, F, p)$ be a forward complete, non-compact, connected $C^{\infty}$-Finsler manifold
whose radial flag curvature is bounded below by that of a von Mangoldt surface 
$(\wt{M}, \tilde{p})$ satisfying $f'(\rho) =0$ and $G(\rho) \ne0$ 
for a unique $\rho \in (0, \infty)$. Assume that, for some $t_0 > \rho$, 
\begin{enumerate}[{\rm (1)}]
\item 
$\diam(\partial B_t^+ (p)) = O (t^\alpha)$ for some $\alpha \in (0, 1)$ as $t \to \infty$,
\item 
$g_v(w, w) \ge F(w)^2$ for all $x \in M \setminus \ol{B_{t_0}^+ (p)}$,
$v \in \cG_p (x)$ and $w  \in T_xM$, 
\item
$\cT_{M} (v, w) = 0$ for all $x \in M \setminus \ol{B_{t_0}^+ (p)}$,
$v \in \cG_p (x)$ and $w  \in T_xM$, 
\item
the reverse curve $\bar{c}(s):=c(l-s)$ of any minimal geodesic segment
$c:[0,l] \lra M \setminus \ol{B_{t_0}^+ (p)}$ is geodesic. 
\end{enumerate}
Then $M$ has finite topological type, i.e., $M$ is homeomorphic to the interior of a compact manifold 
with boundary.
\end{thmA}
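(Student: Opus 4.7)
The plan is to apply critical-point theory to the Finsler distance function $d_p := d(p,\cdot)$ and conclude finite topological type from the Isotopy Lemma. The hypotheses~(2)--(4) are exactly what makes the classical Grove--Shiohama machinery available on the exterior $M \setminus \ol{B_{t_0}^+(p)}$: condition~(3) asserts that this region is Berwald, so its tangent Minkowski norms are mutually linearly isometric; (4) makes reversed minimal segments geodesic, so that the backward tangent at an endpoint is a genuine Finsler geodesic direction; and (2) provides the uniform lower bound $g_v(w,w) \ge F(w)^2$ along the radial directions $v \in \cG_p(x)$, so that Finsler angles at critical-candidate points are controlled by the corresponding $g_v$-angles. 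It then suffices to prove that the critical set of $d_p$ lies inside some $\ol{B_R^+(p)}$, because the Isotopy Lemma will then produce a homeomorphism $M \setminus \ol{B_R^+(p)} \cong \partial B_R^+(p) \times [0,\infty)$ and exhibit $M$ as the interior of the compact manifold with boundary $\ol{B_R^+(p)}$.

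I would prove compactness of the critical set by contradiction. Suppose $\{q_k\}$ are critical points of $d_p$ with $t_k := d(p,q_k) \to \infty$. Criticality produces at each $q_k$ two minimal geodesic segments $\gamma_i^k : [0,t_k] \to M$ ($i=1,2$) from $p$ to $q_k$ whose reversed tangents at $q_k$ subtend an angle at most $\pi/2$. For a parameter $\lambda \in (0,1)$ to be chosen (say $\lambda = \tfrac{1}{2}$), set $r_i^k := \gamma_i^k(\lambda t_k) \in \partial B_{\lambda t_k}^+(p)$; by~(4) the sub-arc of $\gamma_i^k$ from $q_k$ back to $r_i^k$ is a minimizing Finsler geodesic of length $(1-\lambda)t_k$. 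Hypothesis~(1) then forces the upper bound
\[
d(r_1^k, r_2^k) \;\le\; \diam\bigl(\partial B_{\lambda t_k}^+(p)\bigr) \;=\; O\bigl(t_k^\alpha\bigr), \qquad \alpha \in (0,1).
\]

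Next I would feed the configuration $\{p, r_1^k, q_k, r_2^k\}$ into the Toponogov-type comparison Theorem~\ref{TCT} and transport it to the model von Mangoldt surface $(\wt{M}, \tilde{p})$. The hypotheses $f'(\rho)=0$ and $G(\rho)\ne 0$ at a unique $\rho \in (0,\infty)$ pin down $f$ qualitatively on $(\rho,\infty)$: via $f''+Gf=0$, $f$ is strictly monotone there, and two meridians of $\wt{M}$ whose extensions meet at model-distance $t_k$ from $\tilde{p}$ with backward angle at most $\pi/2$ diverge at a rate one can estimate explicitly. The model-side analysis then produces a matching \emph{lower} bound of the form $d(r_1^k, r_2^k) \ge C\cdot t_k$ for some $C = C(\lambda) > 0$ and all sufficiently large $k$, which contradicts the upper bound above; hence the critical set of $d_p$ lies in some $\ol{B_R^+(p)}$, and the Isotopy Lemma finishes the proof.

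The principal technical obstacle is the model-side estimate. Because $(\wt{M},\tilde{p})$ is merely von Mangoldt --- in the example $f(t)=e^{-t^2}\tanh t$ one even has $f(t)\to 0$ --- no closed-form law of cosines is available, and the linear-in-$t_k$ lower bound has to be read off a careful Jacobi-field analysis in $\wt{M}$, using the uniqueness of $\rho$ and the sign of $G(\rho)$ to keep constants uniform in $k$. A secondary, largely routine, point is to verify that the Grove--Shiohama Isotopy Lemma genuinely carries over to the Finsler setting of the exterior under~(2)--(4); here~(3) and~(4) are the key ingredients that let one define the gradient-like deformation without running into Finsler non-reversibility issues.
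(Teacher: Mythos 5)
Your overall framework (critical-point theory for $d_p$, Gromov's isotopy lemma, contradiction from a divergent sequence of critical points) agrees with the paper. However, the mechanism you propose for the contradiction is different from the paper's and contains a genuine gap.

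The paper takes \emph{two} distant critical points $q_{i_1},q_{i_2}$ with $d(p,q_{i_2})>d(p,q_{i_1})\gg t_0$, a minimal segment $c$ between them, and observes that criticality at $q_{i_1}$ forces $\ora{\angle}(pc(0)c(a))\le\pi/2$, while Lemma~\ref{2012_03_29_lem3.2} (the only place condition~(1) enters) ensures $c$ stays outside $\ol{B_{t_0}^+(p)}$. One then applies the TCT to transfer that angle to a comparison triangle in $\wt{M}$, and Lemma~\ref{2012_03_26_lem3.1} -- a structural fact about von Mangoldt cut loci, exploiting $f'<0$ on $(\rho,\infty)$ -- shows the comparison angle at $\wt{q_{i_1}}$ must be \emph{strictly greater} than $\pi/2$ whenever the other vertex is at least as far from $\tilde p$. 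This is the whole contradiction; no quantitative divergence estimate is needed.

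Your proposal instead works with a \emph{single} critical point $q_k$, extracts two minimal geodesics from $p$ to $q_k$, and seeks a linear lower bound $d(r_1^k,r_2^k)\ge Ct_k$ on the distance between their midway points, to contradict the $O(t_k^\alpha)$ upper bound coming from~(1). Two concrete problems. First, the existence of two distinct minimal $p$--$q_k$ segments whose backward tangents subtend angle $\le\pi/2$ does not follow from Definition~\ref{2012_03_31_def3.3}; the definition says that for every $w$ some $v\in\cG_p(q_k)$ has $g_v(v,w)\le 0$, but taking $w=-v_1$ is satisfied trivially by $v=v_1$, so it does not force a second radial direction at acute angle to the first. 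The operative consequence of criticality used in the paper is that $\ora{\angle}x\le\pi/2$ in any forward triangle with vertex $x$, a statement about $x$ and an arbitrary second point $y$, not about two geodesics to $p$. Second, and more seriously, the linear lower bound you need is not proven -- you explicitly defer it as ``the principal technical obstacle'' -- and it is in fact false in general for the admissible model surfaces. Because $f'(\rho)=0$ and $G(\rho)\neq 0$ force $f'<0$ on $(\rho,\infty)$, one can have $f(t)\to 0$ (e.g.\ $f(t)=e^{-t^2}\tanh t$, as in Example~1.1), in which case the parallels shrink and two meridional arcs at parameter $\lambda t_k$ stay within distance $\pi f(\lambda t_k)\to 0$, not $\ge Ct_k$. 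Both geodesics moreover share \emph{both} endpoints $p$ and $q_k$, so the separation between them vanishes at $s=0$ and $s=t_k$ regardless; nothing in the von Mangoldt hypotheses yields a uniform linear spread at the midpoint. The contradiction must therefore be produced by the angle comparison of Lemma~\ref{2012_03_26_lem3.1}, not by a distance estimate. (A minor point: condition~(3) is not that the exterior is Berwald; it requires $\cT_M(v,w)=0$ only for radial $v\in\cG_p(x)$, which is strictly weaker.)
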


\begin{remark}
All conditions in Theorem A are sufficient ones that make our TCT 
hold (see Theorem~\ref{TCT}): The condition (1) guarantees the condition (1) 
in Theorem~\ref{TCT}. The biggest obstruction when we establish a TCT in Finsler geometry 
is the covariant derivative even though $F$ is reversible. 
By the condition (2) and $f' <0$ on $(\rho, \infty)$ (because of $f'(\rho) =0$ and $G(\rho) \ne0$), we can overcome the obstruction, i.e., 
thanks to the (2), we can transplant the strictly concaveness of 
$\wt{M} \setminus \ol{B_{t_0} (\tilde{p})}$ to $M \setminus \ol{B_{t_0}^+ (p)}$
 (see \cite[Section 3]{KOT} for more details), where the convexity on $\wt{M} \setminus \ol{B_{t_0} (\tilde{p})}$ arises from the negative second fundamental form for $f' <0$ on $(\rho, \infty)$. 
 Note that the (2) is the \emph{$2$-uniform convexity} with the sharp constant (see \cite{Ouni}), but, in our situation, {\bf only for special} points and directions. 
 This means that the convexity holds only along all minimal geodesic 
segments emanating from $p$ in our theorem. 
It is very natural thing to assume that the condition (3), if we employ a {\bf Riemannian} 
model surface of revolution $\wt{M}$ as a reference surface. 
Here note that $\cT_{\wt{M}} \equiv 0$. 
It is {\bf not difficult} to construct non-Riemannian spaces satisfying (2) and (3) 
(see Example \ref{exa2013_09_03}).
\end{remark}

\begin{example}(\cite{KOT})\label{exa2013_09_03}\\[1mm]
$\bullet$ Let $(M,g, p)$ be a complete non-compact Riemannian manifold whose 
radial (sectional) curvature is bounded below by that of a von Mangoldt surface 
$(\wt{M}, \tilde{p})$ satisfying $f'(\rho) =0$ and $G(\rho) \ne0$ for unique $\rho \in (0, \infty)$. 
Modify (the unit spheres of) $g$ on $M \setminus B^+_{\rho}(p)$,
outside a neighborhood of $\bigcup_{z \in M \setminus B^+_{\rho}(p)} \cG_p(z)$,
in such a way that the (2) holds. Note that the resulting non-Riemannian metric still satisfies 
the (3), because this modification does not affect $g_v$ for $v \in \bigcup_{z \in M \setminus B^+_{\rho}(p)} \cG_p(z)$.\\[2mm]
$\bullet$ Let $(M, F, p)$ be the Finsler manifold satisfying the radial flag curvature conditions on Theorem A. If $F$ is Riemann on $M \setminus \ol{B_\rho^+(p)}$, 
then $(M, F, p)$ satisfies all conditions in Theorem A except for the (1). E.g.,  
\[
F(v) = 
\begin{cases}
\ \sqrt{g(v, v)} + \beta (v)  \ &\text{on} \ B_\rho^+(p)\\[4mm]
\ \sqrt{g(v, v)}  \ &\text{on} \ M \setminus \ol{B_\rho^+(p)}
\end{cases}
\]
etc.
\end{example}

By changing the structure of $F$, we can reduce a few conditions in Theorem A:

\begin{corollary}
Let $(M, F, p)$ be a forward complete, non-compact, connected $C^{\infty}$-Finsler manifold
whose radial flag curvature is bounded below by that of a von Mangoldt surface 
$(\wt{M}, \tilde{p})$ satisfying $f'(\rho) =0$ and $G(\rho) \ne0$ for unique $\rho \in (0, \infty)$. 
Assume that, for some $t_0 > \rho$, 
\begin{enumerate}[{\rm (1)}]
\item 
$\diam(\partial B_t^+ (p)) = O (t^\alpha)$ for some $\alpha \in (0, 1)$ as $t \to \infty$,
\item 
$g_v(w, w) \ge F(w)^2$ for all $x \in M \setminus \ol{B_{t_0}^+ (p)}$,
$v \in \cG_p (x)$ and $w  \in T_xM$, 
\end{enumerate}
If $F$ is of {\bf Berwald} type on $M \setminus \ol{B_{t_0}^+ (p)}$, 
then $M$ has finite topological type.
\end{corollary}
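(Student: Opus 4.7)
The plan is to deduce the Corollary as an immediate consequence of Theorem A by showing that the Berwald hypothesis on the open set $M \setminus \ol{B_{t_0}^+(p)}$ automatically supplies conditions (3) and (4) of Theorem A there; conditions (1) and (2) are already assumed outright.

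For condition (3) I would invoke the equivalence recorded in the introduction: $\cT_M \equiv 0$ if and only if $M$ is of Berwald type. Because $\cT_M(v,w)$ at a point $x$ is determined by the Finsler structure in an arbitrarily small neighbourhood of $x$, Berwaldness of $F$ on the open set $M \setminus \ol{B_{t_0}^+(p)}$ already yields $\cT_M(v,w)=0$ for every $x$ in that set and every $v,w\in T_xM$---in particular for $v\in\cG_p(x)$ and arbitrary $w$.

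For condition (4) I would use the defining property of a Berwald structure, namely that in any natural local coordinates on the Berwald region the Chern connection coefficients $\Gamma^i_{jk}$ depend only on the base point, so they define an ordinary torsion-free linear connection there. Finsler geodesics are then characterised by the bilinear ODE
\[
\ddot{c}^i(s) + \Gamma^i_{jk}\bigl(c(s)\bigr)\dot{c}^j(s)\dot{c}^k(s) = 0,
\]
which is invariant under the reflection $\dot{c}\mapsto-\dot{c}$. Setting $\bar{c}(s):=c(l-s)$ one computes $\dot{\bar{c}}(s)=-\dot{c}(l-s)$ and $\ddot{\bar{c}}(s)=\ddot{c}(l-s)$, and direct substitution (using the symmetry of $\Gamma^i_{jk}$ in $(j,k)$) shows that $\bar{c}$ solves the same equation, so it is a geodesic. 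Since the minimal segment $c$ is assumed to lie entirely in the Berwald region $M\setminus\ol{B_{t_0}^+(p)}$, the direction-independence of $\Gamma^i_{jk}$ persists along the whole curve and no boundary issue arises. With (3) and (4) thereby verified, Theorem A applies and $M$ has finite topological type. The only modest obstacle is to notice that Berwaldness is a local condition on $F$, so the tangent-curvature characterisation and the direction-free connection may legitimately be invoked on the open set $M\setminus\ol{B_{t_0}^+(p)}$ rather than on all of $M$.
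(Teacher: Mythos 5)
Your proposal is correct and follows exactly the deduction the paper leaves implicit: conditions (1) and (2) are assumed, Berwaldness on the annular region gives (3) via the stated equivalence $\cT_M\equiv 0\iff$ Berwald (a local characterisation, so valid on the open subset), and (4) follows because on a Berwald region the geodesic spray is quadratic in the velocity, hence the geodesic ODE is invariant under $\dot c\mapsto-\dot c$. This matches the paper's intended reading, cf.\ the subsequent remark noting that (4) holds automatically when $F$ is reversible on the same region.
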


\begin{remark}The condition (4) in Theorem A always holds, 
if $F$ is reversible on $M \setminus \ol{B_{t_0}^+ (p)}$. 
In the case where $F$ is Riemannian, 
the diameter growth bound (1) seems to be very restrictive. 
Indeed, if we employ a non-negatively curved non-compact model surface of revolution
$(\wt{M}, \tilde{p})$ having the diameter growth $o(t^{1/ 2})$,
then $M$ is isometric to the $n$-dimensional model space $\wt{M}^n$
(see \cite[Theorem~1.2]{ST}, \cite[Example~1.1]{KT2}). 
Hence, if $F$ is Riemannian, then we can prove, without the growth condition, 
the finiteness of topological type of a complete non-compact Riemannian manifold with radial curvature bounded below by that of an {\bf arbitrary} non-compact model surface 
of revolution admitting a finite total curvature(see \cite[Theorem~2.2]{KT2} and \cite[Theorem~1.3]{TK}).
\end{remark}

By an entirely different technique, if $F$ is reversible, then we can improve Theorem~A  as follows:

\begin{thmB}
Let $(M, F, p)$ be a forward complete, non-compact, connected $C^{\infty}$-Finsler manifold
whose radial flag curvature is bounded below by that of a von Mangoldt surface 
$(\wt{M}, \tilde{p})$ satisfying $f'(\rho) =0$ and $G(\rho) \ne0$ for unique $\rho \in (0, \infty)$. 
Assume that, for some $t_0 > \rho$, 
\begin{enumerate}[{\rm (1)}]
\item 
$\diam(\partial B_t^+ (p)) = O (t^\alpha)$ for some $\alpha \in (0, 1)$ as $t \to \infty$,
\item 
$g_v(w, w) \ge F(w)^2$ for all $x \in M \setminus \ol{B_{t_0}^+ (p)}$,
$v \in \cG_p (x)$ and $w  \in T_xM$, 
\item
$\cT_{M} (v, w) = 0$ for all $x \in M \setminus \ol{B_{t_0}^+ (p)}$,
$v \in \cG_p (x)$ and $w  \in T_xM$. 
\end{enumerate}
If $F$ is reversible, then $M$ is diffeomorphic to $\R^n$ and,
for every unit speed minimal geodesic $\gamma: [0,\infty) \lra M$ emanating from $p$,
we have $K_{M}(\dot{\gamma}(t), w) = G (t)$ for all $t>0$.
\end{thmB}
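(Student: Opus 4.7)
Under reversibility, the reverse of every geodesic is itself a geodesic, so condition (4) of Theorem A is automatically fulfilled; thus (1)--(3) of Theorem B secure all the hypotheses of the TCT of \cite{KOT} (Theorem \ref{TCT}). The plan is to push the TCT to its equality case by exploiting the sublinear diameter growth, which yields the radial curvature rigidity, and then to read off the diffeomorphism $M \cong \R^n$ from this rigidity together with the fact that $f > 0$ on $(0, \infty)$.

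For the rigidity step, I would first fix a ray $\gamma:[0,\infty) \lra M$ from $p$, obtained as a limit of minimal segments to a divergent sequence of points. For any second minimal geodesic $\sigma$ from $p$ ending on $\partial B_t^+(p)$, I would apply the TCT (in its hinge or SSS form) to the triangle $(p, \gamma(t), \sigma(t))$. The bound $d(\gamma(t), \sigma(t)) \le \diam(\partial B_t^+(p)) = O(t^\alpha)$, compared against the corresponding model distance in $\wt{M}$ determined by $f'' + Gf = 0$, pinches the TCT inequality as $t \to \infty$ and identifies the angle at $p$ with its comparison angle at $\tilde p$. Using reversibility to interchange the roles of $\gamma$ and $\sigma$, one promotes this to equality in TCT along every ray from $p$. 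Invoking the rigidity/equality case of the Toponogov comparison then forces the radial Jacobi fields along $\gamma$ to obey $J'' + G(t) J = 0$ with initial conditions matching $f$, i.e.\ $K_M(\dot{\gamma}(t), w) = G(t)$ for every $w$ linearly independent of $\dot{\gamma}(t)$.

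With the radial rigidity in hand, the diffeomorphism is a consequence of two facts. First, since $f > 0$ on $(0, \infty)$ and Jacobi fields along $\gamma$ evolve exactly by $J'' + G(t) J = 0$, they cannot vanish on $(0, \infty)$; hence $p$ has no conjugate points, and $\exp_p$ is a local diffeomorphism on all of $T_pM$. Second, with reversibility and the radial rigidity, two distinct minimal geodesics from $p$ to a common point would generate a triangle whose TCT comparison contradicts the growth bound (1); so the cut locus of $p$ is empty. A surjective local diffeomorphism without cut points from $T_pM$ onto $M$ is a global diffeomorphism, yielding $M \cong \R^n$.

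The main obstacle is the first step: squeezing the TCT into an equality from just the bound $O(t^\alpha)$. The hypothesis is subtler than it looks, because $f$ itself may decay to zero (as in the $e^{-t^2}\tanh t$ example), so the ratio $\diam(\partial B_t^+(p))/f(t)$ need not tend to $0$. One must therefore perform the comparison carefully, exploiting the precise form of $f'' + Gf = 0$ and the von Mangoldt monotonicity of $G$ on $(\rho, \infty)$, in order to extract the rigidity in the correct asymptotic regime; the rest of the argument rides on that single extraction.
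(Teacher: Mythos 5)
Your proposal takes a different route from the paper, and the route has a gap exactly where you acknowledge one. The paper does \emph{not} first extract curvature rigidity from a pinched TCT and then deduce $\Cut(p)=\emptyset$; it proves $\Cut(p)=\emptyset$ directly by contradiction. Assuming a cut point exists, the paper produces a divergent sequence in $N(p)$, picks one such point $x_{i_0}$ beyond $t_0$, joins it to far points $\sigma(r_j)$ on a ray, uses Lemma~\ref{2012_03_29_lem3.2} (this is exactly where the $O(t^\alpha)$ growth enters -- to keep $c_j$ outside $\ol{B_{t_0}^+(p)}$, not to pinch a single comparison) and then subdivides $c_j$, builds chains of comparison triangles via the TCT, passes to the enveloping geodesic $\wt{\eta}_j$ in $\wt{M}$, and estimates $L(\wt{\eta}_j)$ from below by \eqref{length-ineq_c} in terms of the Clairaut constant $\nu_j$. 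The decisive step is that $\int_{d(p,x_{i_0})}^{r_j} f^{-2}\,dt \to \infty$ (here the possible decay of $f$ \emph{helps}, which your ratio heuristic gets backwards), forcing $\nu_j \to 0$ and hence $\ora{\angle} c_j(0) \to \pi$, contradicting $x_{i_0}\in N(p)$. The curvature equality $K_M = G$ is then obtained by citing the argument of \cite[Theorem~4.8]{KT3}, not the other way around.

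The concrete gaps in your write-up are therefore: (a) you leave the rigidity step open, and there is no Finsler TCT-rigidity theorem available to invoke, so the ``rigidity-first'' order is not supported; (b) your stated use of reversibility (``interchange the roles of $\gamma$ and $\sigma$'') does not match its actual role, which is to guarantee $L_{\rm m}(c_j|_{[s_{l-1},s_l]}) = s_l - s_{l-1}$ in \eqref{2012_04_01_thm4.1_2} so that the comparison-triangle sides have the correct lengths; (c) your final diffeomorphism argument conflates absence of conjugate points with emptiness of the cut locus -- $f>0$ alone rules out conjugate points, but the injectivity of $\exp_p$ is precisely the hard part and requires the Clairaut-constant contradiction, for which you only gesture at ``a triangle whose TCT comparison contradicts the growth bound.'' Those gestures are exactly where the proof lives.
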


\begin{remark}
In Theorem B, we can remove the condition (3), if we additionally 
assume that $M$ is of Berwald type. 
The result related to Theorem B is Shiohama 
and the third author's \cite[Theorem~1.2]{ST}, 
where they proved that a complete non-compact Riemannian manifold is isometric to 
the $n$-dimensional model space $\wt{M}^n$
if its radial curvature is bounded below by that of a non-compact model surface of revolution
$\wt{M}$ satisfying $\int_1^{\infty} f(t)^{-2} \,dt = \infty$.
Observe that our von Mangoldt surface always satisfies this integration assumption. 
However, in our Finsler situation, 
it is difficult (and in fact impossible in many cases) to obtain isometry to a model space. 
That is, spaces of constant flag curvatures are not unique. E.g., 
all Minkowski normed spaces have the flat flag curvature and 
all Hilbert geometries satisfy $K_M \equiv -1$ (cf.\ \cite{Shspr}). 
Other result related to Theorem~B is the first and the third authors' \cite[Theorem~1.1]{KT3} 
on a complete non-compact connected Riemannian manifold with smooth convex boundary.
\end{remark}

\section{A Toponogov type triangle comparison theorem}\label{sec2}

We first recall the Toponogov type triangle comparison theorem
established in \cite[Theorem~1.2]{KOT}.
We refer to \cite{BCS} and \cite{Sh} for the basics of Finsler geometry.

Let $(M,F,p)$ be a forward complete, connected
$C^\infty$-Finsler manifold with a base point $p \in M$,
and denote by $d$ its distance function.
The forward completeness guarantees that any two points in $M$ can be joined by
a minimal geodesic segment (by the Hopf-Rinow theorem, \cite[Theorem~6.6.1]{BCS}).
Since $d(x,y) \neq d(y,x)$ in general, we also introduce
\[
d_{\rm{m}} (x, y) := \max\{d(x, y), d(y, x)\}.
\]
It is clear that $d_{\rm{m}}$ is a distance function of $M$.
We can define the `angles' with respect to $d_{\rm{m}}$ as follows.

\begin{definition}{\bf (Angles)}\label{2012_03_24_def2.3}
Let $c :[0,a] \lra M$ be a unit speed minimal geodesic segment
(i.e., $F(\dot{c}) \equiv 1$) with $p \not\in c([0,a])$. 
The \emph{forward} and the \emph{backward angles}
$\ora{\angle}(pc(s)c(a))$, $\ola{\angle}(pc(s)c(0)) \in [0,\pi]$ at $c(s)$ are defined via
\begin{align*}
\cos \ora{\angle}\big( pc(s)c(a) \big) &:= -\lim_{h \downarrow 0} 
 \frac{d(p,c(s+h)) -d(p, c(s))}{d_{\rm{m}} (c(s), c(s + h))} \quad \text{for $s \in [0,a)$}, \\
\cos \ola{\angle}\big( pc(s)c(0) \big) &:= \lim_{h \downarrow 0} 
 \frac{d(p, c(s)) - d (p, c(s-h))}{d_{\rm{m}} (c(s -h), c(s))} \quad \text{for $s \in (0, a]$}.
\end{align*}
(These limits indeed exist in $[-1,1]$ thanks to the definition of $d_{\rm m}$,
see \cite[Lemma~2.2]{KOT}).
\end{definition}

\begin{definition}{\bf (Forward triangles)}\label{def2.2_ft}
For three distinct points $p, x, y \in M$,
\[ \triangle (\ora{px}, \ora{py}) := (p, x, y; \gamma, \sigma, c) \]
will denote the \emph{forward triangle} consisting of unit speed minimal geodesic segments
$\gamma$ emanating from $p$ to $x$, $\sigma$ from $p$ to $y$, and $c$ from $x$ to $y$.
Then the corresponding \emph{interior angles} $\ora{\angle}x, \ola{\angle}y$
at the vertices $x$, $y$ are defined by 
\[
\ora{\angle}x := \ora{\angle}\big( p c(0) c(d(x,y)) \big), \qquad
 \ola{\angle}y := \ola{\angle}\big( p c(d(x,y)) c(0) \big).
\]
\end{definition}

\begin{definition}{\bf (Comparison triangles)}
Fix a model surface of revolution $(\wt{M}, \tilde{p})$. 
Given a forward triangle $\triangle (\ora{px}, \ora{py})= (p, x, y; \gamma, \sigma, c)\subset M$,
a geodesic triangle $\triangle (\tilde{p}\tilde{x} \tilde{y}) \subset \wt{M}$ is called
its \emph{comparison triangle} if
\[ \tilde{d}(\tilde{p}, \tilde{x}) = d(p, x), \qquad 
\tilde{d}(\tilde{p},\tilde{y}) = d(p, y), \qquad 
\tilde{d}(\tilde{x},\tilde{y}) = L_{\rm{m}}(c) \]
hold, where we set
\[
L_{\rm{m}}(c):= \int^{d(x,\,y)}_0 \max\{ F(\dot{c}),F(-\dot{c}) \} \,ds.
\]
\end{definition}

Now, the main result of \cite{KOT} asserts the following.

\begin{theorem}[TCT, \cite{KOT}]\label{TCT}
Assume that $(M, F, p)$ is a forward complete, connected $C^{\infty}$-Finsler manifold
whose radial flag curvature is bounded below by that of a von Mangoldt surface 
$(\wt{M}, \tilde{p})$ satisfying $f'(\rho) =0$ and $G(\rho) \ne0$ for a unique $\rho \in (0, \infty)$. 
Let $\triangle (\ora{px}, \ora{py}) = (p, x, y; \gamma, \sigma, c) \subset M$ be a forward triangle 
satisfying that, for some open neighborhood $\cN(c)$ of $c$,
\begin{enumerate}[$(1)$]
\item
$c([0,d(x,y)]) \subset M \setminus \ol{B^+_{\rho} (p)}$,
\item 
$g_v(w,w) \ge F(w)^2$ for all $z \in \cN(c)$, $v \in \cG_p(z)$ and $w \in T_zM$,
\item
$\cT_{M}(v,w) = 0$ for all $z \in \cN(c)$, $v \in \cG_p(z)$ and $w \in T_zM$,
and the reverse curve $\bar{c}(s):=c(d(x,y)-s)$ of $c$ is also geodesic.
\end{enumerate}
If such $\triangle (\ora{px}, \ora{py})$ admits a comparison triangle $\triangle (\tilde{p}\tilde{x} \tilde{y})$ in $\wt{M}$,
then we have $\ora{\angle} x \ge \angle \tilde{x}$ and $\ola{\angle} y \ge \angle \tilde{y}$.
\end{theorem}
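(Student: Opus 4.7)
The plan is to follow the classical Alexandrov--Toponogov strategy, reducing the angle inequality to a Rauch-type Jacobi field comparison, and using hypotheses~(2) and~(3) to transplant Riemannian second-variation arguments into the Finsler setting. I would parametrize $c$ by arclength and introduce $r(s) := d(p, c(s))$. By the Finsler first variation formula (cf.\ Definition~\ref{2012_03_24_def2.3} and \cite[Lemma~2.2]{KOT}), the forward interior angle satisfies $-\cos\ora{\angle}x = \lim_{s \downarrow 0}[r(s)-r(0)]/d_{\rm m}(c(0),c(s))$, and analogously for $\angle\tilde{x}$ and $\tilde r(s) := \tilde d(\tilde p,\tilde c(s))$. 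Hence $\ora{\angle}x \ge \angle\tilde{x}$ is equivalent to a one-sided bound on $r'(0^+)$ by $\tilde r'(0^+)$. The standard Alexandrov bootstrap reduces this global statement to a pointwise comparison along $c$: at every $s$, the instantaneous rate of change of $r$ is dominated by that of the model $\tilde r$; integrating this yields the angle inequality at the endpoints.

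For the pointwise step I would set up a Jacobi field comparison along the minimal radial geodesic $\gamma_s$ from $p$ to $c(s)$, whose terminal velocity lies in $\cG_p(c(s))$ and varies smoothly on a full-measure set of $s$. The variation induces a Jacobi field $J_s$ along $\gamma_s$ with $J_s(0) = 0$ and $J_s(r(s)) = \dot c(s)$. The radial flag curvature bound $K_M(\dot\gamma_s, J_s) \ge G(t)$ feeds into the Finsler index form; hypothesis~(2), $g_{\dot\gamma_s}(w,w) \ge F(w)^2$ throughout $\cN(c)$, provides the \emph{$2$-uniform convexity} along radial directions needed to convert the $g_{\dot\gamma_s}$-length estimate for $J_s$ into a genuine $F$-length bound. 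Because $c \subset M \setminus \ol{B^+_{\rho}(p)}$ and $f' < 0$ on $(\rho,\infty)$, the corresponding model Jacobi field on $\wt{M}$ is strictly decreasing, which is the geometric content of the concavity of the metric spheres outside $B^+_\rho$; this is where the von Mangoldt assumption (monotonicity of $G$) is essential and where the reduction to a two-dimensional reference surface is justified.

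The main obstacle is the reference-vector dependence of the Chern connection, which produces an extra tangent-curvature term in the Finsler second variation formula that has no Riemannian counterpart. Hypothesis~(3), $\cT_M(v,w) = 0$ for $v \in \cG_p(z)$ and $z \in \cN(c)$, is tailored precisely to annihilate this term along every radial geodesic feeding into the variation, so the classical Jacobi/index-form inequality goes through verbatim. The second clause of~(3), that the reverse curve $\bar c$ is also geodesic, is what lets me deduce the backward-angle bound $\ola{\angle}y \ge \angle\tilde y$ by running the same forward argument on the reversed triangle hinged at $y$; without reversibility of $F$ this symmetry is not automatic. A final auxiliary step is to verify that the comparison triangle in $\wt{M}$ exists and is uniquely determined by the prescribed side lengths; for a von Mangoldt surface with $f'(\rho) = 0$ and $G(\rho) \ne 0$ this follows from the strict convexity of $t \mapsto \tilde d(\tilde p,\tilde c(t))$ along any geodesic $\tilde c$ lying outside $\ol{B_\rho(\tilde p)}$, combined with a continuity/monotonicity argument in the hinge-opening angle at $\tilde p$.
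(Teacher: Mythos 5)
The paper does not contain a proof of this statement. The theorem is introduced in Section~2 with ``We first recall the Toponogov type triangle comparison theorem established in [KOT, Theorem~1.2]'' and restated as ``Now, the main result of [KOT] asserts the following''; the proof lives entirely in the cited companion paper [KOT], so there is no in-paper argument against which your attempt can be checked. The only internal indication of the mechanism is the Remark after Theorem~A, which says that hypothesis~(2) is the sharp $2$-uniform convexity inequality used to transplant the strict concavity of $\wt{M}\setminus\ol{B_\rho(\tilde p)}$ --- coming from the negative second fundamental form of the parallels, since $f'<0$ beyond $\rho$ --- to the Finsler side, and that the reference-vector dependence of the covariant derivative is the chief obstruction, which~(3) is designed to remove.

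Your sketch is compatible with that high-level description: you correctly flag~(2) as $2$-uniform convexity converting $g_v$-estimates to $F$-estimates, (3) as cancelling the reference-vector correction in the second variation, and the concavity of distance spheres from $f'<0$ as the model-side input. But two concrete points should be fixed. First, your closing paragraph sets out to establish existence of the comparison triangle, whereas the statement already \emph{assumes} it (``If such $\triangle(\ora{px},\ora{py})$ admits a comparison triangle\dots''), so that step is out of scope for the theorem as posed. Second, your reduction to ``at every $s$, the instantaneous rate of change of $r$ is dominated by that of the model $\tilde r$'' is a first-order comparison, while the concavity argument you yourself invoke is second-order; a pointwise first-derivative domination along $c$ does not by itself produce a hinge angle inequality, so that sentence misstates the actual bootstrap even though your subsequent Jacobi-field discussion points in the right direction. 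Beyond these local issues, the substance of the proof simply cannot be verified from the present paper, which only cites it.
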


\begin{remark}
If a von Mangoldt surface $(\wt{M}, \tilde{p})$ satisfies $G(\rho) = 0$ for a unique $\rho \in (0, \infty)$, then $f'(\rho) =0$ and $f'(t) >0$ on $(\rho, \infty)$. In this case, Theorem \ref{TCT} holds, 
if $F(w)^2 \ge g_v(w,w)$ for all $z \in \cN(c)$, $v \in \cG_p(z)$ and $w \in T_zM$ as in (2). For this, 
see \cite[Remark 2.10]{K}

\end{remark}

\section{Fundamental tools on model surfaces}\label{sec2.5}

We next introduce some fundamental tools in the geometry of model surfaces of revolution.
We refer to \cite[Chapter 7]{SST} for more details.
Let $(\wt{M}, \tilde{p})$ be a non-compact model surface of revolution with its metric
$d\tilde{s}^2 = dt^2 + f(t)^2d \theta^2$ on $(0,a) \times \Sph_{\tilde{p}}^1$.
Given a unit speed geodesic $\tilde{c} : [0, a) \lra \wt{M}$ $(0 < a \le \infty)$
expressed as $\tilde{c}(s) = (t(s), \theta(s))$,
there exists a non-negative constant $\nu$ such that
\begin{equation}\label{2012_03_28_clairaut}
\nu = f\big( t(s) \big)^2 |\theta' (s)|
 = f\big( t(s) \big) \sin \angle\big( \dot{\tilde{c}}(s), (\partial/\partial t)|_{\tilde{c}(s)} \big)
\end{equation}
for all $s \in [0,a)$.
The equation (\ref{2012_03_28_clairaut}) is called the {\em Clairaut relation},
and  $\nu$ is called the {\em Clairaut constant} of $\tilde{c}$.
Note that $\nu=0$ if and only if $\tilde{c}$ is (a part of) a meridian.
Since $\tilde{c}$ has unit speed, we deduce from $|t'|^2+|f(t)\theta'|^2=1$ that
\[
|t'(s)| = \frac{\sqrt{f(t(s))^{2} - \nu^2}}{f(t(s))}.
\]
Thus we observe that $t'(s) = 0$ if and only if $f(t(s)) = \nu$.
Moreover, if $a<\infty$, then the length $L(\tilde{c})$ of $\tilde{c}$
is not less than
\begin{equation}\label{length-ineq_c}
t(a) - t(0) + \frac{\nu^{2}}{2} \int_{t(0)}^{t(a)} \frac{1}{f(t)\sqrt{f(t)^{2} -\nu^{2}}} \,dt.
\end{equation}
The proof of \eqref{length-ineq_c} can be found in (the proof of) \cite[Lemma~2.1]{ST}.

\section{Proof of Theorem~A}\label{sec3}

Let $(M,F,p)$, $f$ and $\rho$ be as in Theorem~A. 
The following fact on the cut loci of a von Mangoldt surface is important.

\begin{remark}\label{2012_09_19_rem4.1}
The cut locus $\Cut (\tilde{x})$ of $\tilde{x} \not= \tilde{p}$ is either an empty set, 
or a ray properly contained in the meridian $\theta^{-1} (\theta (\tilde{x}) + \pi)$ opposite to $\tilde{x}$.
Moreover, the endpoint of $\Cut(\tilde{x})$ is the first conjugate point to $\tilde{x}$ 
along the minimal geodesic from $\tilde{x}$ passing through $\tilde{p}$ (\cite[Main Theorem]{T}).
\end{remark}

We first show an auxiliary lemma on the model surface.

\begin{lemma}\label{2012_03_26_lem3.1}
If two distinct points
$\tilde{x},\tilde{y} \in \wt{M} \setminus \ol{B_{\rho}(\tilde{p})}$ 
satisfy $\tilde{d}(\tilde{p}, \tilde{x}) \le \tilde{d}(\tilde{p}, \tilde{y})$, then
\[
\angle \big( \dot{\tilde{c}} (0), (\partial / \partial t)|_{\tilde{x}} \big) <\pi /2
\]
holds for any unit speed minimal geodesic segment $\tilde{c}$ emanating from $\tilde{x}$ to $\tilde{y}$. 
In particular, we have $\tilde{c}([0, d(\tilde{x}, \tilde{y})]) \subset \wt{M} \setminus \ol{B_\rho (\tilde{p})}$.
\end{lemma}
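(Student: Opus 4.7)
The plan is a proof by contradiction built on the concavity of the radial function $t:=\tilde d(\tilde p,\cdot)$ on the exterior region $\{t>\rho\}$. First I would pin down the structure of the model $(\wt M,\tilde p)$: from $G$ non-increasing, $f'(0^+)=1$, uniqueness of the critical point $\rho$, and $G(\rho)\neq 0$, one deduces $G(\rho)>0$ (a local minimum of $f$ at $\rho$ would force a second critical point of $f$), so $f$ is strictly increasing on $(0,\rho)$ and strictly decreasing on $(\rho,\infty)$. Because $f'(\rho)=0$, the parallel $\partial B_\rho(\tilde p)$ is itself a closed geodesic of $\wt M$. A direct computation in polar coordinates yields $\mathrm{Hess}(t)=ff'\,d\theta\otimes d\theta$, so along any unit-speed geodesic $\tilde\gamma$ avoiding $\tilde p$,
\[
(t\circ\tilde\gamma)''(s)=\bigl(1-(t\circ\tilde\gamma)'(s)^{2}\bigr)\,\frac{f'(t(s))}{f(t(s))},
\]
which is strictly negative on $\{t>\rho\}$ away from meridional directions.

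Next, assume for contradiction that $\alpha:=\angle(\dot{\tilde c}(0),(\partial/\partial t)|_{\tilde x})\geq\pi/2$, and set $t(s):=\tilde d(\tilde p,\tilde c(s))$. Then $t'(0)=\cos\alpha\leq 0$; in the borderline case $\alpha=\pi/2$ the Clairaut relation (\ref{2012_03_28_clairaut}) gives $\nu=f(t(0))$, and the formula above specializes to $t''(0)=f'(t(0))/f(t(0))<0$ because $t(0)>\rho$. Hence $t$ strictly decreases right after $s=0$ in every case. If $\tilde c$ remained in $\wt M\setminus\ol{B_\rho(\tilde p)}$ throughout $[0,L]$, with $L:=\tilde d(\tilde x,\tilde y)$, then the strict concavity of $t\circ\tilde c$ combined with $t'(0)\leq 0$ would force $t(L)<t(0)$, contradicting the hypothesis $t(0)\leq t(L)$. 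Otherwise, $\tilde c$ first meets $\partial B_\rho(\tilde p)$ at some $s_1\in(0,L)$ and, since $t(L)>\rho$, last exits $\ol{B_\rho(\tilde p)}$ at some $s_2\in (s_1,L)$; both crossings of the closed geodesic $\partial B_\rho(\tilde p)$ must be transversal, because uniqueness of geodesics forbids a tangential meeting.

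To obtain a contradiction in this remaining case I would combine the Clairaut relation (\ref{2012_03_28_clairaut}) with the length inequality (\ref{length-ineq_c}) to estimate $L(\tilde c)$ on each monotone piece of $t\circ\tilde c$, and exhibit an explicit competitor from $\tilde x$ to $\tilde y$ that is strictly shorter than $\tilde c$. Natural candidates are the symmetric pair of minimizers around $\tilde p$ predicted by the cut-locus description of Remark~\ref{2012_09_19_rem4.1}, or a piecewise path consisting of the meridional descent from $\tilde x$ to $\partial B_\rho(\tilde p)$, an arc of the closed geodesic $\partial B_\rho(\tilde p)$, and the meridional ascent to $\tilde y$; the transversal crossings at $s_1,s_2$ together with the strict concavity on the outer pieces are meant to supply the strict length deficit. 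Once $\alpha<\pi/2$ is established (so $t'(0)>0$), the same concavity/length argument yields the ``in particular'' containment $\tilde c([0,L])\subset\wt M\setminus\ol{B_\rho(\tilde p)}$.

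The principal technical obstacle is this length-comparison step in the sub-case where $\tilde c$ dips into $\ol{B_\rho(\tilde p)}$: since $\tilde c|_{[s_1,s_2]}$ is itself a minimal chord between its endpoints on $\partial B_\rho(\tilde p)$, a naive replacement by a $\partial B_\rho(\tilde p)$-arc is not automatically shorter. One must exploit either the cut-locus structure of the von Mangoldt surface (Remark~\ref{2012_09_19_rem4.1}) or a quantitative version of (\ref{length-ineq_c}) to extract the strict inequality, and the degenerate meridional subcase where $\tilde c$ passes through $\tilde p$ itself calls for a separate treatment.
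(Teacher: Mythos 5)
Your opening moves track the paper closely: the derivation of $G(\rho)>0$ and the unimodality of $f$, the geodesic parallel $\{t=\rho\}$, the formula $(t\circ\tilde c)''=(1-(t')^2)f'/f$ and hence $t''(0)<0$ once $t'(0)\le 0$, and the concavity contradiction in the case where $\tilde c$ never leaves $\{t>\rho\}$ are all in the right spirit. The paper actually does not split into cases here; using the Clairaut constant $\nu$ evaluated at the interior minimum $s_0$ of $t\circ\tilde c$ (which exists since $t$ decreases initially yet $t(L)\ge t(0)$), one has $f(t(s_0))=\nu\le f(t(0))$, and since $t(s_0)<t(0)$ while $f$ is strictly decreasing on $(\rho,\infty)$, this forces $t(s_0)<\rho$. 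In other words, your ``easy case'' is impossible and the geodesic is automatically driven below $\rho$.

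The genuine gap is exactly where you flag it: the sub-case where $\tilde c$ dips into $\ol{B_\rho(\tilde p)}$ is not resolved by your proposal, and the length-comparison strategy you suggest cannot succeed in the naive form, since (as you yourself observe) $\tilde c|_{[s_1,s_2]}$ is already a minimal chord between its endpoints on the closed geodesic $\{t=\rho\}$, and your piecewise competitor with corners at $\{t=\rho\}$ has no reason to be shorter. The paper's route is different and does not involve length comparison at all: having shown $t(s_0)<\rho$, the geodesic $\tilde c$ crosses the closed geodesic parallel $\{t=\rho\}$ twice, both crossings lying in the angular sector $\theta^{-1}((\theta(\tilde x),\theta(\tilde x)+\pi))$. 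Two distinct geodesics between the two crossing points (the chord $\tilde c|_{[s_1,s_2]}$ and the arc of the parallel) then contradict the von~Mangoldt cut-locus structure of Remark~\ref{2012_09_19_rem4.1}, which confines the cut locus of any point on that parallel to its opposite meridian. You do mention Remark~\ref{2012_09_19_rem4.1} as a possible ingredient, so the intuition is pointing in the right direction, but you invoke it inside a length-comparison framework that does not close; the missing idea is to use the twice-crossing of the geodesic parallel to produce a forbidden geodesic bigon and contradict the cut-locus structure directly, which is shorter and avoids all length estimates. Your remark that the meridional sub-case (where $\tilde c$ passes through $\tilde p$) needs separate treatment is a fair observation that the paper leaves implicit.
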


\begin{proof}
Let us write $\tilde{c}(s) = (t(s), \theta (s))$.
Suppose that $\angle (\dot{\tilde{c}} (0), (\partial / \partial t)|_{\tilde{x}}) \ge \pi/2$
which is equivalent to $t'(0) \le 0$.
Since $f' < 0$ on $(\rho, \infty)$ because of $f'(\rho) =0$ and $G(\rho) \ne0$ for a unique 
$\rho \in (0, \infty)$, it follows from \cite[(7.1.15)]{SST} that 
\[
t''(0) = f\big( t(0) \big) f'\big( t(0) \big) \theta'\big( t(0) \big)^2 <0.
\] 
Hence $t(s)$ is decreasing on $[0, \delta]$ for some small $\delta > 0$. 
Since $t(d(\tilde{x}, \tilde{y})) = \tilde{d}(\tilde{p}, \tilde{y}) \ge \tilde{d}(\tilde{p}, \tilde{x}) = t(0)$, 
there exists $s_0 \in (0, \tilde{d}(\tilde{p}, \tilde{y}))$ such that $t'(s_0) = 0$ and $t(s_0)<t(0)$.
By the Clairaut relation \eqref{2012_03_28_clairaut}, for any $s \in [0, \tilde{d}(\tilde{p}, \tilde{y})]$,
we observe
\[
f \big( t(s_0) \big) =
f\big( t(s) \big) \sin \angle \big( \dot{\tilde{c}} (s), (\partial / \partial t)|_{\tilde{c}(s)} \big) 
\le f\big( t(s) \big).
\]
Since $f'<0$ on $(\rho,\infty)$ and $t(s_0)<t(0)$, this shows $t(s_0)<\rho$.
Thus $\tilde{c}$ intersects the parallel $t= \rho$ 
twice in $\theta^{-1} ((\theta (\tilde{x}), \theta (\tilde{x}) + \pi))$, where we assume that 
$\theta (\tilde{x}) \le \theta (\tilde{y})$. 
However, since $f'(\rho) =0$, the parallel $t = \rho$ is geodesic. 
Therefore (by rotation) $\tilde{x}$ has a cut point in $\theta^{-1}((\theta (\tilde{x}), \theta (\tilde{x}) + \pi))$.
This contradicts the structure of $\Cut (\tilde{x})$ (see Remark \ref{2012_09_19_rem4.1}).
$\qedd$
\end{proof}

\begin{lemma}\label{2012_03_29_lem3.2}
If two points $x, y \in M \setminus \ol{B_{\rho}^+ (p)}$ satisfy $d(p, y) > d(p, x) \gg t_0$,
then
\[
c \big( [0,d(x, y)] \big) \cap \partial B_{t_0}^+ (p) = \emptyset
\]
holds for any minimal geodesic segment $c$ emanating from $x$ to $y$,
where $t_0 > \rho$ is as in the assumption of Theorem~{\rm A}.
\end{lemma}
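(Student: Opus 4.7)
The plan is to argue by contradiction using only the triangle inequality and the diameter growth hypothesis~(1) of Theorem~A; no curvature comparison or TCT seems to be needed. Suppose for contradiction that $c(s_0) = z \in \partial B^+_{t_0}(p)$ for some $s_0 \in [0, d(x,y)]$, so that $d(p,z) = t_0$. Since $c$ is a minimal geodesic from $x$ to $y$, we have the equality $d(x,y) = d(x,z) + d(z,y)$, and the correctly oriented triangle inequality $d(p,y) \le d(p,z) + d(z,y)$ then yields the lower bound
\[
d(x,y) \;\ge\; d(z,y) \;\ge\; d(p,y) - t_0.
\]

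For a matching upper bound I would pick a unit speed minimal geodesic $\tau : [0, d(p,y)] \lra M$ from $p$ to $y$ and set $y' := \tau(d(p,x))$, which is well defined since $d(p,y) > d(p,x)$. Any subarc of a minimizing Finsler geodesic is itself minimizing, so $d(y',y) = d(p,y) - d(p,x)$; on the other hand $x, y' \in \partial B^+_{d(p,x)}(p)$, so $d(x,y') \le \diam(\partial B^+_{d(p,x)}(p))$. Combining these via $d(x,y) \le d(x,y') + d(y',y)$ with the previous lower bound, the common $d(p,y)$ cancels and one is left with
\[
d(p,x) - t_0 \;\le\; \diam\bigl( \partial B^+_{d(p,x)}(p) \bigr).
\]

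By hypothesis~(1) of Theorem~A there is a constant $C > 0$ with $\diam(\partial B^+_t(p)) \le C t^\alpha$ for all sufficiently large $t$, and since $\alpha < 1$ the linear left side outgrows the sublinear right side once $d(p,x)$ is large enough in terms of $t_0$, $\alpha$, and $C$; this is precisely the quantitative meaning of ``$d(p,x) \gg t_0$'' that is needed. The only delicate point worth flagging is the asymmetry of the Finsler distance: each triangle inequality must be applied in the direction in which it actually holds (e.g.\ $d(p,y) \le d(p,z) + d(z,y)$, not its reverse), and it is essential that $c$ is parametrized as a minimal geodesic \emph{from $x$ to $y$} so that the splitting $d(x,y) = d(x,z) + d(z,y)$ is a genuine equality. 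Once the orientations are fixed the argument is purely metric, and notably does not invoke the flag curvature bound or the additional hypotheses~(2)--(4) of Theorem~A.
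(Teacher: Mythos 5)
Your proof is correct, and it uses the same key ideas as the paper's: both arguments are purely metric, invoke only the triangle inequality and hypothesis (1), and arrive at the same inequality $\diam(\partial B^+_{d(p,x)}(p)) \ge d(p,x) - t_0$, which contradicts the sublinear growth. The only difference is in how the second far-away point on $\partial B^+_{d(p,x)}(p)$ is produced: you take $y'$ on a minimal geodesic from $p$ to $y$, whereas the paper continues along $c$ past its last meeting with $\partial B^+_{t_0}(p)$ and applies the intermediate value theorem to find a parameter $s_1$ with $c(s_1) \in \partial B^+_{d(p,x)}(p)$, then bounds $d(c(0),c(s_1)) = s_1 \ge s_1 - s_0 \ge d(p,x) - t_0$. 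Both routes are valid and of comparable length; your observation that no curvature comparison is needed matches the paper.
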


\begin{proof}
By the assumption (1) of Theorem~A, there is a constant $C > 0$ such that
\begin{equation}\label{2012_03_29_lem3.2_1}
\frac{\diam(\partial B_t^+ (p))}{t^\alpha} < C
\end{equation}
for all $t \gg t_0$.
Suppose that
$c ([0, d(x, y)]) \cap \partial B_{t_0}^+ (p) \not= \emptyset$
for some minimal geodesic segment $c$ emanating from $x$ to $y$. 
Let $S$ be the set of all $s \in (0, d(x, y))$ such that $c(s) \in \partial B_{t_0}^+ (p)$,
and set $s_0:= \sup S$.
Since $d(p, y) > d(p, x)$, there exists $s_1 \in (s_0, d(x, y))$ such that 
$c(s_1) \in \partial B_{t_1}^+(p)$, where $t_1:= d(p, x)$.
Observe from the triangle inequality that
\[
s_1 -s_0 =d\big( c(s_0), c(s_1) \big) \ge d\big( p, c(s_1) \big) -d\big( p, c(s_0) \big)
 =t_1 -t_0.
\]
Since $\diam(\partial B_{t_1}^+ (p)) \ge s_1 > s_1 -s_0 \ge t_1 -t_0$, we obtain
\[
\frac{\diam(\partial B_{t_1}^+ (p))}{t_1^\alpha} > t_1^{1-\alpha} - \frac{t_0}{t_1^\alpha}.
\]
This contradicts (\ref{2012_03_29_lem3.2_1}),
because $t_1 \gg t_0$ and $\alpha<1$.
$\qedd$
\end{proof}

Analogously to \cite{GS}, we define critical points of the distance function
$d_p:=d(p,\cdot)$ as follows.
Recall \eqref{G_p} for the definition of $\cG_p (x)$.

\begin{definition}\label{2012_03_31_def3.3}
We say that a point $x \in M$ is a {\em forward critical point} for $p \in M$
if, for every $w \in T_{x} M \setminus \{0\}$, 
there exists $v \in \cG_p (x)$ such that $g_v (v, w) \le 0$.
\end{definition}

An important consequence of the criticality is that, for any $y \in M$
and any forward triangle $\triangle (\ora{px}, \ora{py})$, we have $\ora{\angle}x \le \pi/2$.
We can prove Gromov's isotopy lemma \cite{G} by a similar arguments to the Riemannian case.

\begin{lemma}\label{2012_03_31_lem3.4}
Given $0 < r_1 < r_2 \le \infty$, 
if $\overline{B_{r_2}^+(p)} \setminus B_{r_1}^+(p)$ has no critical point for $p \in M$, 
then $\overline{B_{r_2}^+(p)} \setminus B_{r_1}^+(p)$ is homeomorphic to 
$\partial B_{r_1}^+(p) \times [r_1, r_2 ]$.
\end{lemma}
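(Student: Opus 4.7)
The plan is to mimic the classical Grove--Shiohama isotopy lemma: I would construct a continuous vector field $W$ on $A := \ol{B_{r_2}^+(p)} \setminus B_{r_1}^+(p)$ along which $d_p$ strictly increases, and then use the flow of a suitable reparametrization of $W$ to identify $A$ with $\partial B_{r_1}^+(p) \times [r_1, r_2]$. To produce $W$, for each $x \in A$ non-criticality supplies $w_x \in T_x M \setminus \{0\}$ with $g_v(v, w_x) > 0$ for every $v \in \cG_p(x)$. Using that $y \mapsto \cG_p(y)$ is upper semi-continuous (subsequential limits of unit tangents of minimal geodesic segments from $p$ to $y_n \to x$ lie in $\cG_p(x)$, thanks to forward completeness and the Hopf--Rinow theorem), I would extend $w_x$ to a smooth local vector field $W_x$ on a neighborhood $U_x$ of $x$ such that $g_{v'}(v', W_x(y)) > 0$ for all $y \in U_x$ and all $v' \in \cG_p(y)$. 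Covering $A$ by such neighborhoods $\{U_\alpha\}$ and choosing a subordinate partition of unity $\{\phi_\alpha\}$, set $W := \sum_\alpha \phi_\alpha W_\alpha$; since for each fixed $y$ and $v' \in \cG_p(y)$ the condition $g_{v'}(v', \cdot) > 0$ defines an open convex half-space in $T_y M$, the convex combination $W(y)$ lies in the intersection of these half-spaces as $v'$ ranges over $\cG_p(y)$.

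Next, I would invoke the Finslerian first variation formula for $d_p$ along integral curves of $W$: for any $C^1$ curve $c$ issuing from $y$, the one-sided Dini derivative of $d_p \circ c$ at $0$ is bounded below by $\min_{v \in \cG_p(y)} g_v(v, \dot c(0))/F(v)$. Combined with upper semi-continuity of $\cG_p$, the compactness of its fibres, and the positivity of $g_{v'}(v', W(y))$ over all $v' \in \cG_p(y)$, this yields a continuous, locally uniform positive lower bound $\mu(y) > 0$ on the rate of increase. Rescaling to $\wt{W} := W/\mu$, the flow $\psi_t$ of $\wt{W}$ satisfies $d_p(\psi_t(y)) = d_p(y) + t$ as long as $\psi_t(y) \in A$. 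The map $\Phi(x, t) := \psi_{t - r_1}(x)$ from $\partial B_{r_1}^+(p) \times [r_1, r_2]$ to $A$ is then the sought homeomorphism: injectivity and continuity are immediate from the strict monotonicity of $d_p \circ \psi_{\cdot}$; surjectivity follows by flowing any $y \in A$ backward along $\psi$ until the level set $\{d_p = r_1\}$ is reached, which occurs in finite time by the uniform rate.

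The main obstacle is the interaction between the non-smoothness of $d_p$ and the set-valued nature of $\cG_p$: the pointwise positivity condition in Definition~\ref{2012_03_31_def3.3} must be upgraded to a locally uniform rate of increase along integral curves, which requires the Finsler first variation inequality to couple correctly with the $v$-dependent Riemannian approximation $g_v$ whose reference vector itself varies over $\cG_p(y)$. The convexity of $\{u \in T_y M : g_v(v, u) > 0\}$ in $u$, which is crucial both for the partition-of-unity gluing and for preserving positivity under convex combinations, relies on $g_v$ being a genuine inner product; reversibility of $F$ plays no role, so the argument proceeds in the general (possibly non-reversible) Finsler setting exactly as in the Riemannian case.
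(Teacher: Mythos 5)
The paper gives no proof of this lemma; it only remarks that Gromov's isotopy lemma carries over ``by a similar argument to the Riemannian case,'' which is precisely the Grove--Shiohama gradient-like-field construction you've sketched, so your approach is the intended one and is essentially correct. The only wrinkle worth flagging is the rescaling step: the rate function $\mu(y):=\min_{v\in\cG_p(y)}g_v\bigl(v,W(y)\bigr)$ is in general only \emph{lower} semi-continuous (because $\cG_p$ is upper semi-continuous and can gain elements at limit points, e.g.\ at the cut locus), so $W/\mu$ need not be continuous, and even after replacing $\mu$ by a continuous positive minorant one only gets a rate of increase $\ge 1$ rather than the exact identity $d_p(\psi_t(y))=d_p(y)+t$. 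The standard and easy fix is not to aim for this identity but to observe that along each integral curve $d_p$ is a strictly increasing Lipschitz function with derivative bounded uniformly away from $0$ on compact pieces, hence a homeomorphism onto its image; one then defines $\Phi(x,s)$ to be the unique point on the flow line through $x\in\partial B_{r_1}^+(p)$ at which $d_p=s$, and continuity of $\Phi$ and of its inverse follows from continuity of the flow together with the uniform lower bound on the rate. With that adjustment the argument goes through, and, as you note, reversibility of $F$ is indeed irrelevant since only the forward distance $d_p$ and forward balls appear.
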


Now we are ready to prove Theorem~A.
\medskip

\noindent {\it Proof of Theorem~{\rm A}.}
By virtue of Lemma~\ref{2012_03_31_lem3.4}, it is sufficient to prove that the set of forward 
critical points for $p$ is bounded. 
Suppose that there is a divergent sequence $\{ q_{i} \}_{i \in \N}$
of forward critical points for $p$. 
Then there exist $i_1, i_2 \in \N$ such that 
\[
d(p, q_{i_2}) > d(p, q_{i_1}) \gg t_0 > \rho.
\]
Let $c:[0, a] \lra M$ be a minimal geodesic segment emanating from $q_{i_1}$ to $q_{i_2}$.
Note that $\ora{\angle} (p c(0) c(a)) \le \pi/2$ by the criticality of $q_{i_1}$,
and $c([0, a]) \cap \partial B_{t_0}^+ (p) = \emptyset$ by Lemma~\ref{2012_03_29_lem3.2}.

We first consider the case where $d(p,q_{i_1}) =\min_{s \in [0,a]} d(p,c(s))$.
For sufficiently small $s_1\in (0,a)$,
the forward triangle $\triangle (\ora{pq_{i_1}},\ora{pc(s_1)})$
admits a comparison triangle $\triangle (\tilde{p} \wt{q_{i_1}} \wt{c(s_1)})$ in $\wt{M}$.
Then, by Theorem~\ref{TCT}, we observe that
$\angle\wt{q_{i_1}} \le \ora{\angle} (p c(0) c(a)) \le \pi/2$.
Since
\[ \tilde{d}(\tilde{p},\wt{q_{i_1}}) =d(p, q_{i_1})
 \le d\big( p, c(s_1) \big) =\tilde{d}\big( \tilde{p}, \wt{c(s_1)} \big), \]
this contradicts Lemma~\ref{2012_03_26_lem3.1}.
If $\min_{s \in [0,a]} d(p,c(s)) <d(p,q_{i_1})$,
then we fix $s_0 \in (0,a)$ such that
$d(p, c(s_0)) = \min_{s \in [0,a]} d(p, c(s))$.
By construction, it holds $\ora{\angle}(pc(s_0)c(a)) =\pi/2$
(note that $\ora{\angle}(pc(s_0)c(a)) >\pi/2$ can not happen by Theorem~\ref{TCT}).
Thus we derive a contradiction from the same argument as the first case.
$\qedd$

\section{Proof of Theorem B}\label{sec4}

Let $(M,F,p)$, $f$, $\rho$ and $t_0$ be as in Theorem~B.
Suppose that the cut locus $\Cut (p)$ of $p$ is not empty.
Then, since $M$ is non-compact, $\Cut(p)$ is an unbounded set
(consider a sequence in the open set
$D_p:=\{ v \in U_pM \,|\, \gamma_v((0,\infty)) \cap \Cut(p) \neq \emptyset \}$
whose limit belongs to the complement
$D_p^c=\{ v \in U_pM \,|\, \gamma_v\ \text{is a ray}\}$,
where $U_pM:=T_pM \cap F^{-1}(1)$ and $\gamma_v(t):=\exp_p(tv)$ for $t \ge 0$).
Let $N(p)$ denote the set of all points $x \in M$ 
admitting at least two minimal geodesic segments emanating from $p$ to $x$. 
Note that $N(p)$ is dense in $\Cut (p)$ (see \cite[Proposition 2.6]{TS}). 

Take a divergent sequence $\{ x_{i} \}_{i \in \N} \subset N(p)$, and fix $i_0 \in \N$ such that $d(p, x_{i_0}) > t_0$.
Since $M$ is non-compact and complete, 
there exists a unit speed ray $\sigma:[0,\infty) \lra M$ emanating from $p$.
Take a divergent sequence $\{r_j\}_{j \in \N} \subset (d(p,x_{i_0}),\infty)$ and, 
for each $j$, let $c_j:[0, a_j] \lra M$ be a unit speed minimal geodesic segment emanating 
from $x_{i_0}$ to $\sigma (r_j)$.
By Lemma~\ref{2012_03_29_lem3.2}, 
$c_j([0,a_j]) \cap \partial B_{t_0}(p) = \emptyset$ holds for all $j \in \N$.

Take a subdivision $s_0 := 0 < s_1 < \cdots < s_{k -1} <s_k :=  a_j$ 
of $[0, a_j]$ such that 
$\triangle (\ora{p c_j (s_{l-1})}, \ora{p c_j (s_l)})$ 
admits a comparison triangle 
$\wt{\triangle}^l := \triangle (\tilde{p} \wt{c_j (s_{l-1})} \wt{c_j (s_l)}) \subset \wt{M}$
for each $l = 1,2, \ldots, k$.
Note that, by the reversibility of $F$,
\begin{equation}\label{2012_04_01_thm4.1_2}
\tilde{d}(\wt{c_j (s_{l-1})}, \wt{c_j (s_l)}) = L_{\rm{m}}(c_j |_{[s_{l-1},\,s_l]}) = s_l - s_{l-1}.
\end{equation}
It follows from Theorem~\ref{TCT} that 
\begin{equation}\label{2012_04_01_thm4.1_3}
\ora{\angle} c_j (s_{l-1})  
\ge 
\angle\big( \tilde{p} \wt{c_j (s_{l-1})} \wt{c_j (s_l)} \big), \qquad 
\ola{\angle}c_j (s_l)
\ge 
\angle \big( \tilde{p} \wt{c_j (s_l)} \wt{c_j (s_{l-1})} \big)
\end{equation}
for each $l = 1,2, \ldots, k$. 
Starting from $\wt{\triangle}^1$, we inductively draw a geodesic triangle 
$\wt{\triangle}^{l + 1} \subset \wt{M}$ which is adjacent to $\wt{\triangle}^l$ so as to have a common side
$\tilde{p} \wt{c_j (s_l)}$, where 
$0 \le \theta (\wt{c_j (s_0)}) \le \theta (\wt{c_j (s_1)}) \le \cdots \le \theta (\wt{c_j (s_k)})$.
We observe from the definition of the angles that
$\ola{\angle} c_j (s_l) + \ora{\angle} c_j (s_l) \le \pi$
for each $l = 1,2, \ldots, k-1$. 
Together with \eqref{2012_04_01_thm4.1_3}, we obtain
\begin{equation}\label{2012_04_01_thm4.1_4}
\angle\big( \tilde{p} \wt{c_j (s_l)} \wt{c_j (s_{l-1})} \big) + 
\angle\big( \tilde{p} \wt{c_j (s_l)} \wt{c_j (s_{l+1})} \big) 
\le \pi.
\end{equation}

Let $\wh{\xi}_j:[0, a_j] \lra \wt{M}$ denote the broken geodesic segment 
consisting of minimal geodesic segments from $\wt{c_j (s_{l-1})}$ to $\wt{c_j (s_l)}$,
$l=1,2,\ldots,k$.
We set $\wh{\xi}_j (s) = (t(\wh{\xi}_j (s)), \theta (\wh{\xi}_j (s)))$. 
Then \eqref{2012_04_01_thm4.1_4} gives us the unit speed (not necessarily minimal) geodesic 
$\wt{\eta}_j :[0, b_j] \lra \wt{M}$ emanating from $\wt{c_j (0)}$ to $\wt{c_j (a_j)}$ 
and passing under $\wh{\xi}_j ([0,a_j])$, i.e., 
$\theta (\wt{\eta}_j) \in [\theta (\wt{c_j (0)}), \theta (\wt{c_j (a_j)})]$ on $[0, b_j]$ 
and $t(\wh{\xi}_j (s)) > t(\wt{\eta}_j (b))$ for all $(s,b) \in (0,a_j) \times (0, b_j)$ with 
$\theta (\wh{\xi}_j (s)) = \theta (\wt{\eta}_j (b))$. 
On the one hand, by \eqref{2012_04_01_thm4.1_2}, we have
\[
L(\wt{\eta}_j) \le L(\wh{\xi}_j) = \sum_{l=1}^k \tilde{d}\big( \wt{c_j (s_{l-1})}, \wt{c_j (s_l)} \big)
= s_k - s_0 = a_j,
\]
where $L(\wt{\eta}_j)$ denotes the length of $\wt{\eta}_j$. 
Moreover, the reversibility of $F$ and the triangle inequality show
\begin{equation}\label{2012_04_01_thm4.1_5}
L(\wt{\eta}_j) \le a_j = d\big( x_{i_0}, \sigma(r_j) \big) \le d(p, x_{i_0}) + r_j .
\end{equation}
On the other hand, it follows from \eqref{length-ineq_c} that 
\begin{align*}
L(\wt{\eta}_j) 
&\ge r_j - d(p, x_{i_0}) 
+ \frac{\nu_j^{2}}{2} \int_{d(p, \,x_{i_0})}^{r_j} 
 \frac{1}{f(t)\sqrt{f(t)^2 - \nu_j^{2}}} \,dt\\
& \ge r_j - d(p, x_{i_0}) 
+ \frac{\nu_j^{2}}{2}
\int_{d(p, \,x_{i_0})}^{r_j}  f(t)^{-2} \,dt,
\end{align*}
where $\nu_j$ denotes the Clairaut constant of $\wt{\eta}_j$. 
Together with \eqref{2012_04_01_thm4.1_5}, we find
\[
4 d(p, x_{i_0}) \ge \nu_j^2
\int_{d(p, \,x_{i_0})}^{r_j} f(t)^{-2} \,dt.
\]
Since $f$ is decreasing on $(\rho, \infty)$ because of $f'(\rho) =0$ and $G(\rho) =0$ for a unique 
$\rho \in (0, \infty)$, this implies $\lim_{j \to \infty} \nu_j = 0$. 
Hence we have
\[
\lim_{j \to \infty} \angle\big( \dot{\wt{\eta}}_j(0), (\partial / \partial t)|_{\wt{\eta}_j(0)} \big) =0.
\]
Combining this with
$\angle(\dot{\wt{\eta}}_j(0),(\partial/\partial t)|_{\wt{\eta}_j(0)})
= \pi-\angle(\tilde{p} \wt{c_j(0)} \wt{c_j(s_1)})$
and \eqref{2012_04_01_thm4.1_3}, we obtain
$\lim_{j \to \infty} \ora{\angle} c_j (0)= \pi$.
This is a contradiction, since $c_j(0) = x_{i_0}\in N(p)$.
Hence $\Cut (p)=\emptyset$, so that $M$ is diffeomorphic to $\R^n$. 
The curvature equality follows from the same argument as \cite[Theorem~4.8]{KT3}.
$\qedd$

\medskip

\begin{flushleft}
K.~Kondo, M.~Tanaka\\
Department of Mathematics, Tokai University,\\
Hiratsuka City, Kanagawa Pref. 259-1292, Japan\\
{\small e-mail: {\tt keikondo@keyaki.cc.u-tokai.ac.jp},
{\tt tanaka@tokai-u.jp}}
\bigskip

S.~Ohta\\
Department of Mathematics, Kyoto University,\\
Kyoto 606-8502, Japan\\
{\small e-mail: {\tt sohta@math.kyoto-u.ac.jp}}
\end{flushleft}


\begin{thebibliography}{KOT}

\bibitem[BCS]{BCS}
D.~Bao, S.-S.~Chern, and Z.~Shen, 
An introduction to Riemann-Finsler geometry, 
Springer, New York (2000). 

\bibitem[Gr]{G}
M.~Gromov, 
Curvature, diameter and Betti numbers,
Comment.\ Math.\ Helv.\ \textbf{56} (1981), 179--195.

\bibitem[GS]{GS}
K.~Grove and K.~Shiohama, A generalized sphere theorem,
Ann.\ of Math.\ (2) \textbf{106} (1977), 201--211. 

\bibitem[K]{K}
K.~Kondo, Grove-Shiohama type sphere theorem in Finsler geometry, 
Preprint (2013). Available at {\sf arXiv:1302.6116}

\bibitem[KOT]{KOT}
K.~Kondo, S.~Ohta, and M.~Tanaka, 
A Toponogov type triangle comparison theorem in Finsler geometry,
Preprint (2012). Available at {\sf arXiv:1205.3913}

\bibitem[KT1]{KT1}
K.~Kondo and M.~Tanaka, 
Total curvatures of model surfaces control 
topology of complete open manifolds with radial curvature bounded below.~I, 
Math.\ Ann.\ \textbf{351} (2011), 251--266.

\bibitem[KT2]{KT2}
K.~Kondo and M.~Tanaka, 
Total curvatures of model surfaces control 
topology of complete open manifolds with radial curvature bounded below.~II, 
Trans.\ Amer.\ Math.\ Soc.\ \textbf{362} (2010), 6293--6324.

\bibitem[KT3]{KT3}
K.~Kondo and M.~Tanaka, 
Applications of Toponogov's comparison theorems for open triangles, 
Osaka J. Math. \textbf{50} (2013), 541--562. 

\bibitem[Oh1]{Ouni}
S.~Ohta, Uniform convexity and smoothness, and their applications in Finsler geometry, 
Math.\ Ann.\ {\bf 343} (2009), 669--699.

\bibitem[Oh2]{Ospl}
S.~Ohta, Splitting theorems for Finsler manifolds of nonnegative Ricci curvature,
to appear in J. Reine Angew. Math. ({\sf arXiv:1203.0079})

\bibitem[Ra]{Ra}
H.-B.~Rademacher, A sphere theorem for non-reversible Finsler metrics, 
Math.\ Ann.\ {\bf 328} (2004), 373--387.

\bibitem[Sh1]{Shvol}
Z.~Shen, Volume comparison and its applications in Riemann-Finsler geometry,
Adv.\ Math.\ {\bf 128} (1997), 306--328.

\bibitem[Sh2]{Sh}
Z.~Shen, Lectures on Finsler geometry, 
World scientific publishing co., Singapore, 2001.

\bibitem[Sh3]{Shspr}
Z.~Shen, Differential geometry of spray and Finsler spaces,
Kluwer Academic Publishers, Dordrecht, 2001.

\bibitem[SST]{SST}
K.~Shiohama, T.~Shioya, and M.~Tanaka, 
The geometry of total curvature on complete open surfaces,
Cambridge Tracks in Math.\ \textbf{159}, Cambridge University Press, Cambridge, 2003. 

\bibitem[ST]{ST} 
K.~Shiohama and M.~Tanaka, 
Compactification and maximal diameter theorem for noncompact manifolds with 
radial curvature bounded below, Math.\ Z.\ \textbf{241} (2002), 341--351.

\bibitem[Ta]{T}
M.~Tanaka, On the cut loci of a von Mangoldt's surface of revolution, 
J.\ Math.\ Soc.\ Japan \textbf{44} (1992), 631--641.

\bibitem[TK]{TK}
M.~Tanaka and K.~Kondo, 
The topology of an open manifold with radial curvature bounded from below 
by a model surface with finite total curvature and examples of model surfaces, 
Nagoya Math. J. \textbf{209} (2013), 23--34. 

\bibitem[TS]{TS}
M.~Tanaka and S.~V.~Sabau, 
The cut locus and distance function from a closed subset of a Finsler manifold, 
Preprint (2012). Available at {\sf arXiv:1207.0918}
\end{thebibliography}
\end{document}